\theoremstyle{plain}
\newtheorem{theorem}{Theorem}[section]
\newtheorem{lemma}[theorem]{Lemma}
\newtheorem{question}[theorem]{Question}
\theoremstyle{definition}
\newtheorem{definition}[theorem]{Definition}
\providecommand\given{}
\newcommand\set@symbol[1][]{\nonscript\,#1\vert\allowbreak\nonscript\,\mathopen{}}
\DeclarePairedDelimiterX\set[1]\{\}{\renewcommand\given{\set@symbol[\delimsize]}#1}
\DeclarePairedDelimiter\card\lvert\rvert
\DeclareMathOperator{\mex}{mex}   
\DeclareMathOperator{\ex}{ex}     
\newcommand{\nadd}{\oplus}        
\newcommand*{\mcomp}{\ltimes}     
\newcommand*{\nim}{\textsc{Nim}}
\newcommand*{\ghack}{\textsc{Green Hackenbush}}
\newcommand*{\game}[1]{\bm{#1}}
\newcommand*{\gtype}[1][\infty]{\Gamma_{#1}}
\newcommand*{\gnum}{\gtype[0]}
\newcommand*{\gset}{\gtype[1]}
\newcommand{\dsum}{+}             
\newcommand{\osum}{:}             
\newcommand{\ojoin}{\ltimes}      
\newcommand{\PSPACE}{\ensuremath{{\text{\small\textsf{PSPACE}}}}}
\begin{document}

\title{The Ordered Join of Impartial Games}

\author{Mi\v{s}o Gavrilovi\'{c}}
\email{miso.gavrilovic@gmx.de}
\author{Alexander Thumm}
\email{alexander.thumm@gmx.net}

\subjclass[2020]{91A46}
\keywords{Combinatorial Game Theory, Compound of Games, Sprague-Grundy Theory, Disjunctive Sum, Ordinal Sum, Grundy Set, Poset Games}

\begin{abstract}
  Inspired by the theory of poset games, we introduce a new compound of impartial combinatorial games and provide a complete analysis in the spirit of the Sprague-Grundy theory.

  Furthermore, we establish several substitution and reduction principles for this compound and consider its computational aspects.
\end{abstract}

\maketitle

\section{Introduction}\label{sec:introduction}

Combinatorial game theory studies two-player games of no chance and no hidden information.
Its roots lie in Bouton's \cite{B1901} complete analysis of the game \nim{}, in which he provided an efficient arithmetic method to determine the outcome of the game's positions.
Inspired by this, Sprague \cite{S1935} and Grundy \cite{G1939} independently found a proper notion, the (Sprague-)Grundy number, to expand Bouton's groundbreaking ideas to arbitrary impartial games.

The Grundy number determines the outcome of a game's position and works nicely in tandem with the disjunctive sum of games.
Furthermore, the Sprague-Grundy Theorem gives a characterization of Grundy numbers as outcome classes under disjunctive sums, which, in turn, is used as the center piece of modern combinatorial game theory.

Our work presented in this article is of similar nature, studying the Grundy set and a new compound, leading to an analogous characterization.
The compound itself, the \emph{ordered join}, is a direct generalization of the disjunctive and ordinal sums of games, combining arbitrary games through the structure of a poset.
As such, it has many similarities to poset games, a type of games introduced by Gale and Neyman \cite{GN1982}.

\medbreak

After introducing the ordered join, we establish two substitution principles, culminating in the main result of this article.
Borrowing from the theory of poset games, we then expand a symmetry result of Fenner and Rogers \cite{FR2015} to ordered joins.
We finally consider some computational aspects of the presented theory and, to conclude, discuss some prospects.

\medbreak

Lastly, the authors would like to mention a related result for poset games \cite{CF2021}, which appeared online during the writing of this article.

\section{Preliminaries}\label{sec:preliminaries}

For an introduction to combinatorial game theory we refer the reader to common literature (e.g.\ \cite{BCG2001,S2013,C1976}).

In this article we only consider \emph{impartial combinatorial games} with \emph{normal-play rules}.
We call such a game \emph{terminating} or \emph{loopfree} if it admits no infinite sequence of moves and \emph{short} if additionally there are only finitely many game positions.
An \emph{option} of a game $G$ is a game $G'$ which is the result of a single move made in $G$.
We denote this relationship by $G \to G'$.
Furthermore, we sometimes lazily write $G'$ to denote any typical option of $G$.
Lastly, we write $o(G)$ for the \emph{outcome class} of a terminating game $G$.

\medbreak

The remainder of this section is concerned with two well-known constructions on games, the disjunctive sum and the ordinal sum.

\subsection{The Disjunctive Sum}
The first compound we discuss is the disjunctive sum, a cornerstone of combinatorial game theory.

\begin{definition}
  The \emph{disjunctive sum} $G \dsum H$ of the games $G$ and $H$ is the game with options $G' \dsum H$ and $G \dsum H'$ for all $G'$ and $H'$.
\end{definition}

In other words, on a player's turn he/she chooses a component $G$ or $H$ and makes a move in it, leaving the other as it is.

The question for the outcome of a disjunctive sum is answered by the Sprague-Grundy Theorem.
It is the condensed result of the independent works of Sprague \cite{S1935} and Grundy \cite{G1939}, extending upon the earlier work of Bouton \cite{B1901} on \nim.
Essential to this theory is the (\emph{Sprague-})\emph{Grundy number} or \emph{nimber}.

\begin{definition}
  The \emph{Grundy number} of a terminating game $G$ is
  \[ 
    \gnum(G) \coloneqq \mex{} \set{\gnum(G') \given G \to G' } 
  \]
  where $\mex \Lambda$ is the \emph{minimal excluded} ordinal of the set of ordinals $\Lambda$.
\end{definition}

For example, the Grundy number of the empty game $\game{0}$ is $\gnum(\game{0}) = 0$ and that of a \nim{} stack of height $\lambda$, written as $\game{\lambda}$ or $*\lambda$, is $\gnum(\game{\lambda}) = \lambda$.

We say $G$ and $H$ are \emph{0-equivalent}, written $G \simeq_0 H$, if $\gnum(G) = \gnum(H)$.
Contrary to the common notation, we reserve $G = H$ to say that $G$ and $H$ are \emph{exactly} the same game.

\medbreak

The Sprague-Grundy Theorem has many portrayals and interpretations, but for our purpose we focus on the following aspect.

\begin{theorem}\label{thm:sprague-grundy}
  Let $G$ and $H$ be terminating games.
  Then $G \simeq_0 H$ if and only if $o(G \dsum X) = o(H \dsum X)$ for all terminating games $X$.
\end{theorem}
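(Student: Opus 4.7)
The plan is to reduce the theorem to two classical auxiliary facts: (a) for a terminating game $G$ the position is a $P$-position (the player to move loses) precisely when $\gnum(G) = 0$; and (b) Grundy numbers are additive under disjunctive sum via nim-addition, i.e.\ $\gnum(G \dsum H) = \gnum(G) \nadd \gnum(H)$, where $\nadd$ denotes nim-addition (bitwise XOR on the binary expansions, extended to ordinals). With these two facts, both implications of the theorem reduce to one line.

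Fact (a) is a short transfinite induction on the game tree: by the definition via $\mex$, $\gnum(G) = 0$ iff no option has Grundy number $0$, which by induction says every option is an $N$-position, making $G$ itself a $P$-position; otherwise some option has Grundy value $0$ and moving there is a winning reply.

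I expect fact (b) to be the main obstacle. It is proved by well-founded induction on the pair $(G, H)$ in the product order. Setting $a = \gnum(G)$, $b = \gnum(H)$, $c = a \nadd b$, one must verify (i) no option of $G \dsum H$ has Grundy number $c$, and (ii) every ordinal $d < c$ is the Grundy number of some option of $G \dsum H$. For (i), an option has the form $G' \dsum H$ or $G \dsum H'$ with Grundy number $\gnum(G') \nadd b$ or $a \nadd \gnum(H')$ by the induction hypothesis; by cancellativity of $\nadd$, equality with $c$ would force $\gnum(G') = a$ or $\gnum(H') = b$, contradicting the $\mex$-defining property. For (ii), given $d < c$ one locates the most significant binary digit in which $c$ and $d$ differ; on the component ($G$ or $H$) whose Grundy value has that digit set, one invokes the $\mex$ property to produce an option whose Grundy number, nim-added to the other component's Grundy number, equals exactly $d$, and then uses the induction hypothesis to realise this as the Grundy number of the corresponding option of $G \dsum H$.

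With (a) and (b) in hand, the theorem is immediate. For the forward direction, if $\gnum(G) = \gnum(H)$ then for any terminating $X$
\[
  \gnum(G \dsum X) = \gnum(G) \nadd \gnum(X) = \gnum(H) \nadd \gnum(X) = \gnum(H \dsum X),
\]
and (a) converts this equality of Grundy numbers into equality of outcome classes. For the converse, suppose $\gnum(G) \neq \gnum(H)$ and take $X = \game{\gnum(G)}$, a \nim{} stack of that height; then $\gnum(G \dsum X) = \gnum(G) \nadd \gnum(G) = 0$ while $\gnum(H \dsum X) = \gnum(H) \nadd \gnum(G) \neq 0$, so by (a) the outcomes differ, contradicting the hypothesis.
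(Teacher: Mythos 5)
The paper does not prove this statement at all: it is quoted as the classical Sprague--Grundy Theorem and attributed to Sprague and Grundy, and the nim-addition formula $\gnum(G \dsum H) = \gnum(G) \nadd \gnum(H)$ (your fact (b)) is likewise stated in the paper without proof immediately after the theorem. So there is no proof in the paper to compare against; your proposal is the standard textbook argument, and it is correct. The decomposition into (a) the $P$-position characterization $o(G)$ is a previous-player win iff $\gnum(G)=0$, and (b) the nim-addition formula, followed by the one-line forward direction and the contrapositive converse with $X = \game{\gnum(G)}$, is exactly how this result is usually established. The only point deserving a remark is that the paper works with terminating (loopfree) games that need not be short, so Grundy values are arbitrary ordinals; your ``most significant binary digit'' argument in step (ii) of fact (b) then needs the base-$2$ Cantor normal form of ordinals (equivalently, one can run the argument directly from the recursive characterization $a \nadd b = \mex\set{a' \nadd b, \, a \nadd b' \given a' < a,\ b' < b}$). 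This is a routine extension, not a gap, but it is worth flagging since the transfinite case is the one the paper actually uses.
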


The Grundy number of the disjunctive sum of arbitrary games depends only on the Grundy numbers of its components.
Indeed,
\[ 
  \gnum(G \dsum H) = \gnum(G) \nadd \gnum(H). 
\]
Herein $\nadd$ denotes the \emph{nimber addition} of ordinals, which in the finite case can be calculated as the bitwise \texttt{xor} of binary representations.

This result gives meaning to the Grundy number as \emph{the} key invariant for the disjunctive sum, as analysing disjunctive sums is strongly connected to analysing Grundy numbers.

\subsection{The Ordinal Sum}
The second compound we discuss is the ordinal sum, a common construction for poset games and an important tool for the analysis of \ghack.

\begin{definition}
  The \emph{ordinal sum} $G \osum H$ of the games $G$ and $H$ is the game with options $G \osum H'$ and $G' \osum \game{0}$ for all $G'$ and $H'$.
\end{definition}

In other words, on a player's turn he/she chooses a component $G$ or $H$ and makes a move in it, leaving $G$ as it is if moving in $H$ but discarding $H$ if moving in $G$.

The Grundy number of an ordinal sum is not completely determined by the Grundy numbers of its components, e.g.\ $\game{0} \osum \game{1} \not\simeq_0 (\game{1} \dsum \game{1}) \osum \game{1}$.
However, it is sufficient to look one move into the future.

\begin{definition}
  The \emph{Grundy set} $\gset(G)$ of a terminating game $G$ is
  \[ 
    \gset(G) \coloneqq \set{\gnum(G') \given G \to G'}. 
  \]
\end{definition}

In other words, it is the set of Grundy numbers of the options of $G$, and thus we have the relation $\gnum(G) \coloneqq \mex\gset(G)$.

We say $G$ and $H$ are \emph{1-equivalent}, written $G \simeq_1 H$, if $\gset(G) = \gset(H)$.
Using this notation, we state the \emph{colon principle} and additional related results, cf.\ \cite[pp.~190, 219f.]{BCG2001}.

\begin{theorem}\label{thm:ordinal_substitution}
  Let $G,H$ and $X$ be terminating games.
  \begin{enumerate}
    \item If $G \simeq_0 H$, then $X \osum G \simeq_0 X \osum H$.
    \item If $G \simeq_1 H$, then $G \osum X \simeq_1 H \osum X$.
  \end{enumerate}
\end{theorem}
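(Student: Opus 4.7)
My plan is to handle the two parts separately, with part (2) being essentially formal while part (1) requires one real idea.

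For part (2), I would induct on the game $X$. The options of $G \osum X$ split into two families: $G \osum X'$ with $X \to X'$, for which the inductive hypothesis directly gives $\gset(G \osum X') = \gset(H \osum X')$ and hence equal Grundy numbers; and $G' \osum \game{0}$ with $G \to G'$. For the latter I would first record the elementary fact $\gnum(Y \osum \game{0}) = \gnum(Y)$, proved by a straightforward induction on $Y$ using that the options of $Y \osum \game{0}$ are exactly the $Y' \osum \game{0}$. This reduces the Grundy numbers of the second family to $\gset(G)$ on one side and $\gset(H)$ on the other, which are equal by hypothesis. Taking unions yields $\gset(G \osum X) = \gset(H \osum X)$.

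Part (1) is more delicate because $G \simeq_0 H$ imposes no constraint on $\gset(G)$ and $\gset(H)$ beyond their mexes. My plan is to prove the stronger assertion that $\gnum(X \osum G)$ depends on $G$ only through $\gnum(G)$. Fix $X$ and set $a_k := \gnum(X \osum \game{k})$; the goal is $\gnum(X \osum G) = a_{\gnum(G)}$ for every terminating $G$, which immediately implies part (1). Using the auxiliary identity from above, the Grundy number recursion for $X \osum G$ takes the form
\[
  \gnum(X \osum G) = \mex\bigl(\{\gnum(X \osum G') : G \to G'\} \cup \gset(X)\bigr),
\]
and the induction hypothesis on the options $G'$ replaces each $\gnum(X \osum G')$ by $a_{\gnum(G')}$.

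The decisive step is a monotonicity lemma asserting that $k \mapsto a_k$ is strictly increasing. This follows from the recursion $a_\ell = \mex(\{a_k : k < \ell\} \cup \gset(X))$: the value $a_{\ell - 1}$ belongs to $\{a_k : k < \ell\}$, while the defining property of $a_{\ell - 1}$ as a mex supplies every ordinal strictly below $a_{\ell - 1}$ inside the same set, so $a_\ell$ must exceed $a_{\ell - 1}$. Once monotonicity is in hand, the inductive step runs through cleanly: writing $n = \gnum(G)$, the set $\{a_{\gnum(G')} : G \to G'\}$ contains $\{a_j : j < n\}$ (as $\{0, \ldots, n-1\} \subseteq \gset(G)$), avoids $a_n$ (as $n \notin \gset(G)$ and $a$ is injective), and any extra element $a_m$ with $m > n$ lies strictly above $a_n$ by monotonicity, so cannot shift the mex. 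I expect the monotonicity lemma to be the main obstacle; without it the higher options of $G$ could in principle reach $a_n$ and the collapse would fail.
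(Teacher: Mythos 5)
Your proposal is correct. Note that the paper itself offers no proof of this theorem --- it is quoted from \emph{Winning Ways} --- so there is no in-paper argument to compare against; but your argument is sound and self-contained. Part (2) is exactly the routine induction one would expect, hinging on the identity $\gnum(Y \osum \game{0}) = \gnum(Y)$ so that the options $G' \osum \game{0}$ contribute precisely $\gset(G)$ to $\gset(G \osum X)$. For part (1), what you actually prove is the stronger quantitative statement $\gnum(X \osum G) = a_{\gnum(G)}$ with $a_\rho = \ex_\rho \gset(X)$, i.e.\ the closed formula $\gnum(X \osum G) = \ex_{\gnum(G)}\gset(X)$ that the paper records immediately after the theorem; your monotonicity lemma for $\rho \mapsto \ex_\rho \Lambda$ is indeed the crux, and your verification that the extra terms $a_m$ with $m > n$ cannot lower the $\mex$ below $a_n$, while all of $\{a_j \given j < n\} \cup \gset(X)$ fills in every ordinal below $a_n$, is exactly right. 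One small point: since the paper works with terminating (not merely short) games, Grundy values may be arbitrary ordinals, so the monotonicity argument should be phrased as ``$a_\ell > a_\mu$ for all $\mu < \ell$'' rather than via the predecessor $a_{\ell-1}$ (your own reasoning already delivers this, since every ordinal below $a_\mu$ lies in $\set{a_k \given k < \mu} \cup \gset(X) \subseteq \set{a_k \given k < \ell} \cup \gset(X)$); with that cosmetic adjustment the proof goes through for ordinal-valued Grundy numbers as well.
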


Furthermore, as in the case of disjunctive sums, there exists a simple formula for the Grundy number of an ordinal sum:
\[ 
  \gnum(G \osum H) = \ex_{\gnum(H)} \gset(G).
\]
Herein $\ex_\rho \Lambda \coloneqq \mex \left(\Lambda \cup \set{ \ex_\mu \Lambda \given \mu < \rho }\right)$ is the \emph{$\rho$-th excluded} ordinal of the set of ordinals $\Lambda$, in particular $\ex_0 \Lambda = \mex \Lambda$.

\section{The Ordered Join}\label{sec:ordered_join}

In this section we will introduce the main notion --- the \emph{ordered join} of games.
The general idea is to combine arbitrary games through the structure of a poset.
We then show how the ordered join generalizes the disjunctive and ordinal sums and study its compositional behaviour.

\medbreak

An ordered join is a game consisting of two layers of play.
On the lower layer we have a family of independent games, in which ultimately the moves will be made.
On the upper layer we have a poset, which connects the games of the lower layer and does not change when making a move.

\begin{definition}\label{def:ordered_join}
  Let $S$ be a poset and $(G_i)_{i\in S}$ a family of games.
  We denote by $S \ojoin_i G_i$ the \emph{ordered join} of the games $G_i$ in the shape $S$.
  It is the game whose options are all ordered joins $S \ojoin_i G'_i$ satisfying $G_{i_0} \to G'_{i_0}$ for one $i_0 \in S$, with $G'_i = \game{0}$ for $i > i_0$ and $G_i = G'_i$ otherwise.
\end{definition}

In other words, on a player's turn he/she chooses a component $G_{i_0}$ and makes a move in it, discarding all components $G_i$ with $i > i_0$ and leaving others as they are.

Note that the definition of the ordered join does allow for a component to be the empty game $\game{0}$, which has technical advantages but isn't affecting the game.
However, it will also be useful to have a notion for the essential part of the compound.
Motivated by this, we define the \emph{support} of an ordered join $S \ojoin_i G_i$ to be the induced subposet $T \subseteq S$ of its shape consisting of all $i \in S$ with $G_i \neq \game{0}$.

\begin{figure}[ht]
  \begin{tikzpicture}
    \tikzset{
        component/.style={circle, inner sep=0, minimum size=20pt},
        pics/ordered join/.style n args={6}{code={
          \node[component] (L0C) at (   0, -2) {\small #1};
          \node[component] (L1L) at (-1.5,  0) {\small #2};
          \node[component] (L1R) at (+1.5,  0) {\small #3};
          \node[component] (L2L) at (  -3, +2) {\small #4};
          \node[component] (L2C) at (   0, +2) {\small #5};
          \node[component] (L2R) at (  +3, +2) {\small #6};

          \draw[thick] (L0C) -- (L1L)  (L0C) -- (L1R);
          \draw[thick] (L1L) -- (L2L)  (L1L) -- (L2C);
          \draw[thick] (L1R) -- (L2C)  (L1R) -- (L2R);
        }}
    }

    \pic[scale=0.4] (G)   at (-2,0) 
      {ordered join={$G_1$}{$G_2$}{$G_3$}{$G_4$}{$G_5$}{$G_6$}};

    \node at (0,0) {$\longrightarrow$};

    \pic[scale=0.4] (G')  at (+2,0) 
      {ordered join={$G_1$}{$G_2$}{$G'_3$}{$G_4$}{$\game{0}$}{$\game{0}$}};
  \end{tikzpicture}
  \caption{A move in an ordered join of games.}
  \label{fig:ordered_join}
\end{figure}

\medbreak

The question whether or not an ordered join is a terminating game leads to the following condition on its shape.
This condition appears in the same role within the theory of poset games.

\begin{definition}
  A poset $S$ is called a \emph{well partially ordered set} (\emph{wposet}) if every infinite sequence in $S$ contains an infinite ascending subsequence.
\end{definition}

Equivalently, $S$ is a wposet if it contains neither an infinitely descending chain nor an infinite antichain (see \cite[Theorem 2.1]{H1952}).
Using both, one readily makes the following observation.

\begin{lemma}
  Let $G = S \ojoin_i G_i$ be an ordered join.
  \begin{enumerate}
    \item The game $G$ is terminating if and only if the support of $G$ is a wposet and each component $G_i$ is terminating.
    \item The game $G$ is short if and only if the support of $G$ is finite and each component $G_i$ is short.
  \end{enumerate}
\end{lemma}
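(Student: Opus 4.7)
My plan is to prove both parts by lifting pathological behavior (infinite plays; infinitely many positions) from the components or the shape up to $G$, and conversely extracting such behavior from $G$ via the wposet condition.

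For part~(1), I would handle the forward direction by contrapositive in three cases. If some $G_{i_0}$ admits an infinite play, replay it in $G$ always at coordinate $i_0$; the first such move zeroes coordinates above $i_0$, but all further moves at $i_0$ remain legal. If the support contains an infinite antichain $(a_k)$, play successively at $a_1, a_2, \ldots$; pairwise incomparable coordinates are never discarded by one another. If the support contains an infinite descending chain $(d_k)$, play successively at $d_1, d_2, \ldots$, each $d_{k+1}$ lying strictly below $d_k$ and so not discarded by the move at $d_k$. For the converse, suppose $G$ admits an infinite play $G = G^{(0)} \to G^{(1)} \to \cdots$ with move-coordinates $i_1, i_2, \ldots \in T$, and apply the wposet property to extract a nondecreasing subsequence $(i_{n_k})$. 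A strict jump $i_{n_k} < i_{n_{k+1}}$ is impossible, since the step $n_k$ zeroes the coordinate $i_{n_{k+1}}$ and no later step can resurrect a component. So the subsequence is constant, say $i_\star$; the moves at its steps (together with all other moves occurring at $i_\star$) form an infinite play inside $G_{i_\star}$, contradicting its termination.

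For part~(2), the termination requirements are already supplied by part~(1), leaving only the finiteness to address. The $(\Leftarrow)$ direction is immediate: positions of $S \ojoin_i G_i$ are determined coordinatewise, and only finitely many coordinates carry nontrivial games, each contributing finitely many positions. For $(\Rightarrow)$, I would inject two infinite families into the finite position set of $G$. First, any position $H$ of $G_i$ is realized in $G$ by performing the moves of a play $G_i \to \cdots \to H$ always at coordinate $i$; distinct $H$ yield distinct positions (distinguishable at coordinate $i$), so each $G_i$ has only finitely many positions. Second, if $T$ were infinite, then as an infinite wposet $T$ would contain an infinite strictly ascending chain $j_1 < j_2 < \cdots$ (pick any enumeration of distinct elements and extract); the positions $P_k$ obtained from $G$ by a single move at $j_k$ are pairwise distinct, since for $k' < k$ the coordinate $j_{k'}$ is untouched in $P_k$ but replaced by a proper option of $G_{j_{k'}}$ in $P_{k'}$, contradicting finiteness.

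The main obstacle I anticipate is the converse of~(1): the seemingly trivial but essential observation that discarded components are never resurrected must be used carefully to force the extracted nondecreasing subsequence of move-coordinates to be constant. Only then does the component-level termination hypothesis kick in and close the argument; this is the bridge between the shape-level wposet condition and the component-level termination data.
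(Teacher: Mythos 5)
Your proof is correct and takes essentially the route the paper intends: the paper states this lemma without proof, remarking only that one ``readily'' deduces it from the two characterizations of wposets (extraction of an ascending subsequence for the converse of~(1), and the absence of infinite descending chains and antichains for the forward direction), which is exactly the structure of your argument. The one delicate point --- that a discarded component is never resurrected, so the extracted nondecreasing subsequence of move-coordinates must be constant and hence yields an infinite play inside a single component --- is identified and handled correctly.
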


The similarity in the terminating conditions of poset games and ordered joins is obviously no coincidence, as the two theories are closely related.
For example, the ordered join $S \ojoin_i \game{1}$ is the same game as the poset game on $S$.
Therefore, each poset game has a natural description as a nontrivial ordered join.

\medbreak

The flexibility of the ordered join lies in its shape.
Specific shapes result in disjunctive and ordinal sums of games.
The disjunctive sum $G_1 \dsum G_2$ arises as the ordered join $A_2 \ojoin_i G_i$, where $A_2 = \set{1,2}$ is the two-element antichain.
Furthermore,
\[
  (S_1 + S_2) \ojoin_i G_i = (S_1 \ojoin_i G_i) + (S_2 \ojoin_i G_i)
\]
where $S_1 + S_2$ is the disjoint sum of the shapes $S_1$ and $S_2$.
On the other hand, the ordinal sum $G_1 \osum G_2$ arises as the ordered join $C_2 \ojoin_i G_i$, where $C_2 = \set{1 < 2}$ is the two-element chain, and a similar equation for these compounds and the ordinal sum of posets holds.

In combination, the disjunctive and ordinal sum lead to ordered joins in the shape of \emph{series-parallel posets}.
In contrast, the ordered join allows for more general interactions between its components.
For example, the ordered join $N \ojoin_i \game{1}$ in the shape $N \coloneqq \set{1 < 2 > 3 < 4}$ is neither a disjunctive sum nor an ordinal sum of games in a nontrivial way.

As discussed above, ordered joins are closed under formation of disjunctive and ordinal sums.
In the same way, they are also closed under ordered joins themselves.
Formalizing this leads us to a related construction from the theory of posets: the 
\emph{modular composition} (also \emph{substitution}, \emph{ordinal sum}, or \emph{X-join}; see e.g.\ \cite{MR1984,S1959,S1971}).

\begin{definition}\label{def:modular_composition}
Let $R$ be a poset and let $(S_i)_{i \in R}$ be a family of posets. 
The \emph{modular composition} $R \mcomp_i S_i$ is the disjoint union $\bigsqcup_i S_i$ with the inherited order extended in such a way that $j_1 \leq j_2$ holds for all elements $j_1 \in S_{i_1}$ and $j_2 \in S_{i_2}$ with $i_1 \leq i_2$.
\end{definition}

In other words, the modular composition is the poset obtained by replacement of the elements $i \in R$ by their associated posets $S_i$.
Examples of it include the disjoint sum, the ordinal sum, and the lexicographic product of posets.
The following immediate observation summarizes the relation between the modular composition and the ordered join, generalizing the above equations.

\begin{lemma}\label{lem:composition}
  $(R \mcomp_i S_i) \ojoin_j G_j = R \ojoin_i (S_i \ojoin_j G_j)$.
\end{lemma}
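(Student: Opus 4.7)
The plan is to prove the equality by showing that the two games have literally the same options, since a game is determined by its option set. The argument is purely structural; for terminating games it can be organised as an induction on the game tree, with the inductive hypothesis being the lemma itself applied to the positions reached after one move.

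First I would unfold the options of the left side. By Definition~\ref{def:ordered_join}, an option of $(R \mcomp_i S_i) \ojoin_j G_j$ is given by an index $j_0 \in \bigsqcup_i S_i$, say $j_0 \in S_{i_0}$, together with a move $G_{j_0} \to G'_{j_0}$; every component $G_j$ with $j$ strictly above $j_0$ in $R \mcomp_i S_i$ is replaced by $\game{0}$, and the remaining components stay untouched. Then I would unfold the options on the right side: an outer move picks $i_0 \in R$ and an option of $S_{i_0} \ojoin_j G_j$ and replaces $S_i \ojoin_j G_j$ by $\game{0}$ for every $i >_R i_0$, while the inner move in turn picks $j_0 \in S_{i_0}$ and a move $G_{j_0} \to G'_{j_0}$, zeroing out the $G_j$ for $j >_{S_{i_0}} j_0$.

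The decisive step is the defining property of the modular composition (Definition~\ref{def:modular_composition}): the strict upset of $j_0 \in S_{i_0}$ inside $R \mcomp_i S_i$ decomposes as the strict upset of $j_0$ inside $S_{i_0}$ together with the whole of $S_i$ for each $i >_R i_0$, while indices in $S_i$ for $i$ incomparable with or strictly below $i_0$ remain unrelated to $j_0$. This matches the two separate layers of zeroing on the right-hand side exactly, once one uses the small observation that a trivial ordered join $S \ojoin_j \game{0}$ has no options and thus equals $\game{0}$, so that the outer replacement $S_i \ojoin_j G_j \mapsto \game{0}$ coincides with the componentwise replacement $S_i \ojoin_j G_j \mapsto S_i \ojoin_j \game{0}$ that the left side performs. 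Applying the lemma to the post-move position (strictly smaller in the terminating case) identifies the two option sets and closes the argument.

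The main obstacle is purely bookkeeping: reconciling the single-layer zeroing on the left with the two-layer zeroing on the right, with the trivial-join observation $S \ojoin_j \game{0} = \game{0}$ providing the bridge. Beyond this, the proof is a direct unfolding of the definitions.
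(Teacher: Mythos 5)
Your proposal is correct and matches the argument the paper leaves implicit (the lemma is stated there as an ``immediate observation'' with no written proof): one unfolds Definition~\ref{def:ordered_join} on both sides and matches the strict upset of $j_0\in S_{i_0}$ in $R\mcomp_i S_i$ with the two layers of zeroing on the right, using $S\ojoin_j\game{0}=\game{0}$ to reconcile the outer replacement with the componentwise one. You correctly isolate the only two points that need saying --- the decomposition of the upset coming from Definition~\ref{def:modular_composition} and the trivial-join identification --- so nothing is missing.
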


\section{Substitution Principles}\label{sec:substitution}

In this section we establish the claimed analogue of the Sprague-Grundy Theorem for the ordered join.
It is given in the form of two theorems.
The first is a general substitution principle.
It shows that the Grundy set and thus also the outcome class of an ordered join, given its shape, is completely determined by the Grundy sets of its components.
The second concerns outcome classes of games under ordered joins.

While the former has apparent theoretical and practical utility, the consequences of the latter are of a more philosophical nature.
We will discuss some of these once both results have been presented.

\subsection{General Substitution Theorem}\label{ssec:general_substitution}

As a preliminary to the main result, we first establish a related substitution principle.
Despite its auxiliary nature, this special substitution theorem is a valuable tool for computations.
It is a simple generalization of the substitution of a game $H$ within an ordinal sum $G \osum H$ as in Theorem~\ref{thm:ordinal_substitution}.

\begin{theorem}\label{thm:special_substitution}
  Let $S \ojoin_i G_i$ and $S \ojoin_i H_i$ be terminating ordered joins in the same shape $S$.
  Suppose that $i_0 \in S$ is maximal, and that $G_i = H_i$ for all $i \neq i_0$.
  If $G_{i_0} \simeq_0 H_{i_0}$, then $S \ojoin_i G_i \simeq_0 S \ojoin_i H_i$.
\end{theorem}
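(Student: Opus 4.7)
The plan is to invoke Theorem~\ref{thm:sprague-grundy} with $X = B$ and reduce the claim to showing that $A + B$ is a P-position, where $A = S \ojoin_i G_i$ and $B = S \ojoin_i H_i$; since $o(B + B) = P$, this is exactly what $A \simeq_0 B$ amounts to. I would then proceed by Noetherian induction on the game rank of $A + B$, exhibiting a winning response for the second player to an arbitrary first move. By symmetry it suffices to treat a first move made in $A$, at some position $j \in S$, and I split on the relation of $j$ to the fixed maximal element $i_0$.

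If $j$ is strictly below or incomparable to $i_0$, the second player mirrors the move in $B$ at the same $j$, which is legal because $G_j = H_j$. When $j < i_0$ the $i_0$-components are discarded in both joins (since $i_0 > j$), so the resulting $A'$ and $B'$ are literally the same game and $A' + B'$ is trivially a P-position. When $j$ is incomparable to $i_0$ the $i_0$-components survive intact in both joins (since $i_0 \not> j$), and the pair $(A',B')$ still satisfies the theorem's hypotheses --- same shape, same maximal $i_0$, matching off-$i_0$ components, and $G_{i_0} \simeq_0 H_{i_0}$ --- with strictly smaller rank, so the inductive hypothesis applies.

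The genuinely new case is $j = i_0$, where the first player replaces $G_{i_0}$ by some option $G'$ and, by maximality of $i_0$, leaves all other components of $A$ untouched. Set $\gamma = \gnum(G')$. Then $\gamma \in \gset(G_{i_0})$, whereas $\gnum(H_{i_0}) = \gnum(G_{i_0}) = \mex \gset(G_{i_0})$, forcing $\gamma \neq \gnum(H_{i_0})$. If $\gamma \in \gset(H_{i_0})$, the second player selects $H_{i_0} \to H'$ with $\gnum(H') = \gamma$; the resulting $A' + B'$ has $G' \simeq_0 H'$ at $i_0$, still matches off $i_0$, and has smaller rank, so the inductive hypothesis applies. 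Otherwise $\gamma > \gnum(H_{i_0})$ --- since $\gamma < \mex \gset(H_{i_0})$ would place $\gamma$ in $\gset(H_{i_0})$ --- and the second player instead stays in $A$ by moving $G' \to G''$ with $\gnum(G'') = \gnum(H_{i_0})$, which exists because every ordinal below $\mex \gset(G') = \gamma$ lies in $\gset(G')$. The resulting $A'' + B$ again satisfies the theorem's hypotheses and has smaller rank.

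The main obstacle is precisely this last subcase --- a ``reversing'' first move in $G_{i_0}$ landing on a Grundy number not realised by any option of $H_{i_0}$. The second player then cannot mirror in $B$ and must instead continue in $A$, exploiting that every ordinal strictly below $\gnum(G')$ occurs as the Grundy number of some option of $G'$. The maximality of $i_0$ is used essentially throughout: it keeps the rest of the join untouched after moves at $i_0$, and it ensures that any move at $j \neq i_0$ either preserves or discards the $i_0$-component in both joins simultaneously, so that the inductive structure is preserved.
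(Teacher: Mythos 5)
Your proof is correct and takes essentially the same approach as the paper's: both show $S \ojoin_i G_i \dsum S \ojoin_i H_i$ is a previous-player win by copying moves made outside $i_0$ and, for a move at $i_0$, either answering with a matching option of $H_{i_0}$ or reverting the Grundy number inside the just-moved component. The only cosmetic difference is that you split on whether $\gnum(G'_{i_0}) \in \gset(H_{i_0})$, whereas the paper splits on whether the Grundy number at $i_0$ increased or decreased.
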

\begin{proof}
  Let $G \coloneqq S \ojoin_i G_i$ and $H \coloneqq S \ojoin_i H_i$.
  We show $G \dsum H \simeq_0 \game{0}$, which is equivalent to $ G \simeq_0 H$.
  By symmetry, it suffices to show that each move $G \to G'$ has an answer $H \to H'$ with $G' \simeq_0 H'$.  
  Consider such a move $G \to G'$.

  If the move is not in $i_0$, we can copy the move in $H$ and are done by induction.
  Otherwise, the move is in $i_0$, and thus the Grundy number of the component in $i_0$ changes.
  It either increases or decreases.
  \begin{itemize}
    \item If $\gnum(G_{i_0}) < \gnum(G'_{i_0})$, there is a move $G'_{i_0} \to G''_{i_0}$ with $G_{i_0} \simeq_0 G''_{i_0}$.
    \item If $\gnum(G_{i_0}) > \gnum(G'_{i_0})$, there is a move $H_{i_0} \to H'_{i_0}$ with $G'_{i_0} \simeq_0 H'_{i_0}$.
  \end{itemize}
  In both cases the claim follows by induction.
\end{proof}

Recall that empty components in an ordered join do not affect the game.
Thus, the theorem is applicable even if $i_0$ is not maximal in the shape but maximal in the support of the ordered join.

Furthermore, the converse of the theorem also holds.
Indeed, if $G_{i_0} \not\simeq_0 H_{i_0}$, say $\gnum(G_{i_0}) < \gnum(H_{i_0})$, then there is a move $H_{i_0} \to H'_{i_0}$ with $G_{i_0} \simeq_0 H'_{i_0}$.
By Theorem~\ref{thm:special_substitution}, $S \ojoin_i G_i \simeq_0 S \ojoin_i H'_i \not\simeq_0 S \ojoin_i H_i$.

\medbreak

By contrast, our general substitution theorem allows for replacement of arbitrary components while preserving the Grundy set of the compound.
As such, it generalizes substitution of $G$ within $G \osum H$.

\begin{theorem}\label{thm:general_substitution}
  Let $S \ojoin_i G_i$ and $S \ojoin_i H_i$ be terminating ordered joins in the same shape $S$.
  If $G_i\simeq_1 H_i$ for each $i \in S$, then $S \ojoin_i G_i \simeq_1 S \ojoin_i H_i$.
\end{theorem}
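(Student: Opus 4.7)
The plan is to run a well-founded induction on the terminating game $G := S \ojoin_i G_i$, aiming to show $\gset(G) = \gset(H)$ where $H := S \ojoin_i H_i$. By symmetry it suffices to verify $\gset(G) \subseteq \gset(H)$: for every option $G \to G'$ I need to exhibit an option $H \to H'$ with $G' \simeq_0 H'$.

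Suppose the option $G \to G'$ arises from a move $G_{i_0} \to G'_{i_0}$. The hypothesis $G_{i_0} \simeq_1 H_{i_0}$ furnishes a companion move $H_{i_0} \to H'_{i_0}$ with $G'_{i_0} \simeq_0 H'_{i_0}$, which determines the candidate option $H' = S \ojoin_i H'_i$. The strategy for comparing $G'$ and $H'$ is to pass through the auxiliary ordered join $L := S \ojoin_i L_i$ defined by $L_{i_0} := G'_{i_0}$, $L_i := H_i$ for $i \not\geq i_0$, and $L_i := \game{0}$ for $i > i_0$.

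The first step is $G' \simeq_0 L$. This follows from the inductive hypothesis applied to the proper option $G'$ together with the family $(L_i)$, since the componentwise $\simeq_1$ relation between $G'$ and $L$ holds trivially at $i_0$, holds above $i_0$ (both components are $\game{0}$), and holds at all other indices because $G'_i = G_i \simeq_1 H_i = L_i$. The second step is $L \simeq_0 H'$, which is an instance of Theorem~\ref{thm:special_substitution}: the two games agree outside $i_0$, the index $i_0$ is maximal in both supports (all above-$i_0$ components vanish), and $L_{i_0} = G'_{i_0} \simeq_0 H'_{i_0}$. Composing the two equivalences gives $\gnum(G') = \gnum(H')$, as required.

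The main conceptual obstacle, and the reason Theorem~\ref{thm:special_substitution} cannot be invoked directly on $G'$ and $H'$, is that these two games can differ simultaneously at several components, none of which need be maximal in $S$. The auxiliary game $L$ is engineered precisely so that $i_0$ becomes a genuinely maximal point of its support: the move at $i_0$ already wipes out the entire above-$i_0$ cone, and this lets the induction process the many $\simeq_1$-differences below $i_0$ via a proper option of $G$, after which the special substitution theorem handles the single remaining $\simeq_0$-difference at the top.
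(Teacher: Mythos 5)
Your argument is correct and coincides with the paper's own proof: the auxiliary game $L$ you construct is exactly the mixed game $X'$ in the paper (with the above-$i_0$ components zeroed out so that $i_0$ is maximal in the support), followed by the same two steps --- induction for $G' \simeq_1 L$ and Theorem~\ref{thm:special_substitution} for $L \simeq_0 H'$.
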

\begin{proof}
  By symmetry, it suffices to show that $\gset(G) \subseteq \gset(H)$, where $G \coloneqq S \ojoin_i G_i$ and $H \coloneqq S \ojoin_i H_i$.
  Consider a move $G\to G'$, say in $i_0$.
  Since $G_{i_0} \simeq_1 H_{i_0}$, we find a move $H \to H'$ in $i_0$ with $G'_{i_0} \simeq_0 H'_{i_0}$.
  
  \begin{figure}[ht]
    \begin{tikzpicture}
      \tikzset{
        component/.style={circle, inner sep=0, minimum size=20pt},
        pics/substitution/.style n args={4}{code={
          \node (NAME) at (-2.5, +2.5) {#1};

          \node[component] (L0C) at (   0, -2) {\small #4};
          \node[component] (L1L) at (-1.5,  0) {\small #2};
          \node[component] (L1R) at (+1.5,  0) {\small #4};
          \node[component] (L2C) at (   0, +2) {\small #3};
          \node[component] (L2R) at (  +3, +2) {\small #4};

          \draw[thick] (L0C) -- (L1L)  (L0C) -- (L1R);
          \draw[thick] (L1L) -- (L2C);
          \draw[thick] (L1R) -- (L2C)  (L1R) -- (L2R);
        }}    
      }

      \newcommand{\gc}{\color{black}}
      \newcommand{\hc}{\color{red}}

      \pic[scale=0.4] (G) at (-4.5, 1.9) 
        {substitution={$G$}{$\gc G_{i_0}$}{$\gc G_\ast$}{\gc $G_\ast$}};
      \pic[scale=0.4] (H) at (+4.5, 1.9)
        {substitution={$H$}{$\hc H_{i_0}$}{$\hc H_\ast$}{$\hc H_\ast$}};
      
      \node at (-4.5, 0) {$\Big\downarrow$};
      \node at (+4.5, 0) {$\Big\downarrow$};

      \pic[scale=0.4] (G') at (-4.5, -2) 
        {substitution={$G'$}{$\gc G'_{i_0}$}{$\game{0}$}{$\gc G_\ast$}};
      
      \node at (-2.25, -2.1) {$\simeq_1$};
      \node at (-2.25, -2.7) {\small (Induction)};

      \pic[scale=0.4] (X') at (   0, -2) 
        {substitution={$X'$}{$\gc G'_{i_0}$}{$\game{0}$}{$\hc H_\ast$}};
      
      \node at (+2.25, -2.1) {$\simeq_0$};
      \node at (+2.25, -2.7) {\small (Theorem~\ref{thm:special_substitution})};
      
      \pic[scale=0.4] (H') at (+4.5, -2)
        {substitution={$H'$}{$\hc H'_{i_0}$}{$\game{0}$}{$\hc H_\ast$}};
    \end{tikzpicture}
    \caption{The proof of Theorem~\ref{thm:general_substitution}.}
    \label{fig:general_substitution}
  \end{figure}
  
  We claim $G' \simeq_0 H'$.
  In order to prove this, let us consider the mixed game $X' \coloneqq S \ojoin_i X'_i$ (see Figure~\ref{fig:general_substitution}), where $X'_i = H_i$ for $i \neq i_0$ and $X'_{i_0} = G'_{i_0}$.
  We obtain $X'$ from $G'$ by replacing every component in $G'$ except $G'_{i_0}$.
  By induction, we have $G' \simeq_1 X'$.
  Because $X'_{i_0} = G'_{i_0} \simeq_0 H'_{i_0}$ and $i_0$ is maximal in the support of $X'$, Theorem~\ref{thm:special_substitution} yields $X' \simeq_0 H'$.

  Since $G'$ was arbitrary, this proves the inclusion $\gset(G) \subseteq \gset(H)$.
\end{proof}

\subsection{Outcome-Equivalence Theorem}

The Sprague-Grundy Theorem (Theorem~\ref{thm:sprague-grundy}) characterizes the Grundy numbers as outcome classes of games under disjunctive sums.
In a similar fashion, the following theorem provides a characterization of Grundy sets as outcome classes of games under ordered joins.

\begin{theorem}\label{thm:outcomes}
  Let $G$ and $H$ be terminating games.
  Then $G \simeq_1 H$ if and only if $o(S \ojoin_i G_i) = o(S \ojoin_i H_i)$ for all terminating ordered joins with $G_{i_0} = G$ and $H_{i_0} = H$ for some $i_0 \in S$ and $G_i = H_i$ otherwise.
\end{theorem}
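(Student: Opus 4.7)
The plan is to prove the two implications separately. The forward direction will be an immediate corollary of Theorem~\ref{thm:general_substitution}, while the reverse will be a contrapositive argument producing an explicit distinguishing ordered join.

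For the forward direction, assume $G \simeq_1 H$ and consider any ordered join of the specified form. Then $G_{i_0} = G \simeq_1 H = H_{i_0}$, while $G_i = H_i$ (so trivially $G_i \simeq_1 H_i$) for every $i \neq i_0$. Theorem~\ref{thm:general_substitution} thus yields $S \ojoin_i G_i \simeq_1 S \ojoin_i H_i$, and equality of Grundy sets forces equality of Grundy numbers, hence of outcome classes.

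For the reverse direction I would argue the contrapositive. Assume $\gset(G) \neq \gset(H)$. The key preliminary step is the observation that, for any set of ordinals $\Lambda$, the assignment $\rho \mapsto \ex_\rho \Lambda$ enumerates the ordinals not in $\Lambda$ bijectively and in strictly increasing order; this is a short transfinite induction directly from the defining formula $\ex_\rho \Lambda = \mex(\Lambda \cup \set{\ex_\mu \Lambda \given \mu < \rho})$. Consequently the complements of $\gset(G)$ and $\gset(H)$ must disagree at some index, giving an ordinal $\rho$ with $a \coloneqq \ex_\rho \gset(G) \neq \ex_\rho \gset(H) \eqqcolon b$.

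With $\rho, a, b$ in hand, I would take the shape $S \coloneqq \set{i_0 < i_1} \sqcup \set{i_2}$, a two-element chain with an isolated point, and place the Nim heap $\game{\rho}$ at $i_1$ and $\game{a}$ at $i_2$ in both joins, keeping $G$ at $i_0$ on one side and $H$ at $i_0$ on the other. The resulting joins are terminating: the support is finite, $G$ and $H$ are terminating by hypothesis, and Nim heaps of ordinal size are terminating by the well-foundedness of the ordinals. Combining Lemma~\ref{lem:composition} (which identifies the join with $(G \osum \game{\rho}) \dsum \game{a}$, and similarly with $H$) with the Grundy-number formulas recalled in Section~\ref{sec:preliminaries} one obtains
\[
  \gnum(S \ojoin_i G_i) = a \nadd a = 0 \quad\text{and}\quad \gnum(S \ojoin_i H_i) = b \nadd a \neq 0,
\]
so the two joins lie in different outcome classes, falsifying the right-hand side of the equivalence.

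The only genuinely non-routine step is the enumeration lemma for $\ex_\rho$, which reduces the mismatch between two Grundy sets to a mismatch at a single transfinite index; once this is in place, the rest is a direct computation using the formulas $\gnum(G \dsum H) = \gnum(G) \nadd \gnum(H)$ and $\gnum(G \osum H) = \ex_{\gnum(H)} \gset(G)$.
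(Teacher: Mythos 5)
Your proposal is correct and follows essentially the same route as the paper: the forward direction is the same appeal to Theorem~\ref{thm:general_substitution}, and your distinguishing join $(G \osum \game{\rho}) \dsum \game{a}$ with $a = \ex_\rho \gset(G)$ is exactly the paper's family $\game{\lambda} \dsum (G \osum \game{\rho})$ specialized to $\lambda = \ex_\rho \gset(G)$. The only difference is that you spell out the enumeration property of $\rho \mapsto \ex_\rho \Lambda$, which the paper uses implicitly when asserting that the outcome classes of these games determine $\gset(G)$.
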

\begin{proof}
  If $G \simeq_1 H$, then, by Theorem~\ref{thm:general_substitution}, any two ordered joins as above have the same Grundy set and hence the same outcome class.
  
  For the converse, consider the games $\game{\lambda} \dsum (G \osum \game{\rho})$, where $\lambda$ and $\rho$ range over the ordinal numbers.
  The Grundy number of such a game is 
  \[
    \gnum(\game{\lambda} \dsum (G \osum \game{\rho})) = \lambda \nadd \ex_\rho \gset(G).
  \]
  Clearly, the latter is zero if and only if $\lambda = \ex_\rho \gset(G)$.
  Thus, the outcome classes of these games determine $\gset(G)$.
  But, by assumption, we have $o(\game{\lambda} \dsum (G \osum \game{\rho})) = o(\game{\lambda} \dsum (H \osum \game{\rho}))$ for all $\lambda$ and $\rho$.
  Therefore we conclude that $\gset(G) = \gset(H)$.
\end{proof}

By the above theorem, the fundamental question

\smallskip
\centerline{\textit{What is the outcome of an ordered join of games?}}
\smallskip

\noindent necessitates the study of the Grundy sets of its components.
As far as ordered joins are concerned, the Grundy set is \emph{the} key invariant, just like the Grundy number is for disjunctive sums. 
It is thus a fundamental part of the presented results.
This motivation of the Grundy set, from first principles of game theory, also complements Theorem~\ref{thm:general_substitution}. 
It shows that the assumptions therein are not only sufficient but also necessary in general --- even if one is only interested in the outcome class of an ordered join of games.

However, by Theorem~\ref{thm:general_substitution} one still obtains the Grundy set of the compound, given its shape, from the Grundy sets of its components.
This is no coincidence, as it follows from the compositional properties of ordered joins, i.e.\ from Lemma~\ref{lem:composition}.
More precisely, these properties, together with Theorem~\ref{thm:outcomes}, imply a variant of Theorem~\ref{thm:general_substitution} with finitely many substitutions.

\section{Reduction Principles}\label{sec:reduction}

For the purpose of determining the Grundy number  of an ordered join $S \ojoin_i G_i$ of games, it is sometimes possible to entirely neglect certain components $G_i$.
In this section we explore two reduction principles of this kind.

\subsection{Zero Upper Sets}

Given a terminating game $S \ojoin_i G_i$, it follows from Theorem~\ref{thm:special_substitution} that a component $G_{i_0}$, with $G_{i_0} \simeq_0 \game{0}$ and $i_0$ maximal in $S$, does not contribute to the Grundy number of the compound.
By a trivial induction argument, any finite upper set $Z \subseteq S$ with $G_{i} \simeq_0 \game{0}$ for all $i \in Z$ can thus be neglected.
Our first reduction principle is an extension of this, which allows for infinite upper sets.

\begin{lemma}\label{lem:zero_upper_set}
  Let $S \ojoin_i G_i$ be a terminating game, and let $Z \subseteq S$ be an upper set.
  If $G_i \simeq_0 \game{0}$ for all $i \in Z$, then $S \ojoin_i G_i \simeq_0 (S \setminus Z) \ojoin_i G_i$.
\end{lemma}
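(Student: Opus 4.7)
The plan is to prove $G := S \ojoin_i G_i \simeq_0 (S \setminus Z) \ojoin_i G_i =: H$ by exhibiting a winning strategy for the second player in $G \dsum H$, in the style of the proof of Theorem~\ref{thm:special_substitution}. The argument proceeds by induction along the well-founded game tree of $G$.

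The crucial structural observation is that $Z$ being an upper set in $S$ prevents any element $i \in S \setminus Z$ from lying above any element $j \in Z$: if $i > j$ with $j \in Z$, then $i \in Z$. Consequently, a move in $G$ at any position $i_0 \in Z$ discards only positions $i > i_0$, all of which automatically lie in $Z$, and therefore leaves the subfamily $(G_i)_{i \in S \setminus Z}$ entirely untouched.

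With this in hand, the second player's strategy has three cases. If the first player moves in $H$ at some $i_0 \in S \setminus Z$, or in $G$ at some $i_0 \in S \setminus Z$, the reply mirrors the move in the opposite component; the resulting games $G^*$ and $H^*$ still satisfy the hypotheses of the lemma --- the $Z$-components freshly replaced by $\game{0}$ trivially remain $0$-equivalent to $\game{0}$ --- and the induction hypothesis yields $G^* \simeq_0 H^*$. If instead the first player moves in $G$ at some $i_0 \in Z$, no corresponding move exists in $H$. Here we exploit that $\gnum(G_{i_0}) = 0$, which forces the new state $G^*_{i_0}$ to have positive Grundy value, and hence to admit an option $G^{**}_{i_0}$ with $\gnum(G^{**}_{i_0}) = 0$. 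The second player replies by moving in $G$ at $i_0$ once more, reaching $G^{**}$. By the structural observation, $G^{**}$ and $H$ agree on $S \setminus Z$, while every $Z$-indexed component of $G^{**}$ is $0$-equivalent to $\game{0}$; the induction hypothesis applied to $G^{**}$ therefore yields $G^{**} \simeq_0 H$, as required.

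The principal obstacle is this third case, where the naive one-move mirroring strategy has no target. The decisive ingredient is the upper-set condition on $Z$: it guarantees that any move inside $Z$ disturbs no component of $H$, which makes the two-move correction carried out entirely within $G$ sufficient to return to a $P$-position.
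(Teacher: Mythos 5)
Your proof is correct and rests on exactly the same two ideas as the paper's: the upper-set condition ensures a move in $Z$ never disturbs the components indexed by $S \setminus Z$, and a move in a component with $\gnum(G_{i_0}) = 0$ can be reverted to Grundy value $0$, with the induction running over the game tree of $G$. The only difference is packaging --- you exhibit a second-player strategy in $G \dsum H$, whereas the paper computes $\gnum(G) = \gnum(H)$ directly via the $\mex$, showing $\gset(H) \subseteq \gset(G)$ and that every option of $G$ arising from a move in $Z$ is reversible to a position $0$-equivalent to $H$.
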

\begin{proof}
  We write $G \coloneqq S \ojoin_i G_i$ and $H \coloneqq (S \setminus Z) \ojoin_i G_i$.
  Every option $H'$ of $H$ arises from a move in $i_0 \in S \setminus Z$.
  The same move also gives rise to an option $G'$ of $G$.
  By induction, $G'$ and $H'$ satisfy $\gnum(G') = \gnum(H')$.
  This proves $\gset(H) \subseteq \gset(G)$ and thus $\gnum(H) \leq \gnum(G)$.

  It suffices to show $\gnum(H) \neq \gnum(G')$ for each option $G'$ of $G$ obtained from a move in some $i_0 \in Z$.
  Since $G_{i_0} \simeq_0 \game{0}$ by assumption on $Z$, and thus $G'_{i_0} \not\simeq_0 \game{0}$, there exists a move $G'_{i_0} \to G''_{i_0}$ such that $G''_{i_0} \simeq_0 \game{0}$.
  The corresponding option $G''$ satisfies $\gnum(G'') = \gnum(H)$ by induction.
  Hence, we obtain $\gnum(H) = \gnum(G'') \neq \gnum(G')$.
\end{proof}

\subsection{Mirror Symmetry}

Our second reduction principle is a minor generalization of a result of Fenner and Rogers \cite[Lemma~2.21]{FR2015} and is concerned with ordered joins that exhibit a form of symmetry.
At the level of shapes, the precise setting is captured by the following. 

\begin{definition}
  Let $S$ be a poset.
  We call an involution $\sigma \colon S \to S$ \emph{weakly order-preserving} if the upper set $\set{j \in S \given i < j \text{ or } \sigma(i) < j}$ is $\sigma$-invariant for each $i\in S$. 
\end{definition}

Note that every order-preserving involution is also weakly order-preserving in the above sense.
The converse, however, is false.
For example, the poset $N \coloneqq \set{1 < 2 > 3 < 4}$ allows for three weakly order-preserving involutions, only one of which is order-preserving.

\medbreak

As a preliminary to our reduction principle, we prove its special case regarding fixed-point free involutions.

\begin{lemma}\label{lem:symmetry}
  Let $S \ojoin_i G_i$ be a terminating game, and let $\sigma \colon S \to S$ be a weakly order-preserving involution.
  If $G_i \simeq_1 G_{\sigma(i)}$ for all $i \in S$ and $\sigma$ has no fixed points, then $S \ojoin_i G_i \simeq_0 \game{0}$.
\end{lemma}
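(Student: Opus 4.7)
The plan is to use a mirror strategy: the second player answers any first-player move in a component $i_0$ with the analogous move in the $\sigma$-partner $\sigma(i_0)$. I would begin by observing that the weakly order-preserving hypothesis, together with absence of fixed points, forces $i_0$ and $\sigma(i_0)$ to be incomparable for every $i_0 \in S$. Indeed, if $\sigma(i_0) > i_0$, then $\sigma(i_0)$ lies in the $\sigma$-invariant upper set $Z_{i_0} \coloneqq \{j \in S : i_0 < j \text{ or } \sigma(i_0) < j\}$, so $\sigma$-invariance forces $i_0 = \sigma(\sigma(i_0)) \in Z_{i_0}$, which requires $i_0 < i_0$ or $\sigma(i_0) < i_0$ --- absurd; the reverse case $\sigma(i_0) < i_0$ is symmetric. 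Incomparability guarantees that after a first-player move in $i_0$ the component at $\sigma(i_0)$ is untouched, so the mirror move remains available.

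The argument then proceeds by (transfinite) induction on the rank of $G \coloneqq S \ojoin_i G_i$, which is well-founded since $G$ terminates. Given a first-player move $G_{i_0} \to G_{i_0}^{(1)}$ producing the position $G^{(1)}$, I use $G_{i_0} \simeq_1 G_{\sigma(i_0)}$ to pick a response $G_{\sigma(i_0)} \to G_{\sigma(i_0)}^{(2)}$ with $G_{i_0}^{(1)} \simeq_0 G_{\sigma(i_0)}^{(2)}$, yielding the position $G^{(2)}$. To close the strategy I must show $G^{(2)} \simeq_0 \game{0}$. Since all components of $G^{(2)}$ indexed by $Z_{i_0}$ equal $\game{0}$, Lemma~\ref{lem:zero_upper_set} gives $G^{(2)} \simeq_0 (S \setminus Z_{i_0}) \ojoin_i G_i^{(2)}$. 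In the restricted shape $i_0$ is maximal, and $G_{i_0}^{(2)} = G_{i_0}^{(1)} \simeq_0 G_{\sigma(i_0)}^{(2)}$, so Theorem~\ref{thm:special_substitution} permits replacing the component at $i_0$ by $G_{\sigma(i_0)}^{(2)}$, producing $\tilde{G} \coloneqq (S \setminus Z_{i_0}) \ojoin_i \tilde{G}_i$ with $\tilde{G}_{i_0} = \tilde{G}_{\sigma(i_0)} = G_{\sigma(i_0)}^{(2)}$ and $\tilde{G}_j = G_j$ for the remaining $j$. Because $Z_{i_0}$ is $\sigma$-invariant, $\sigma$ restricts to a fixed-point-free, weakly order-preserving involution on $S \setminus Z_{i_0}$, and $\tilde{G}_j \simeq_1 \tilde{G}_{\sigma(j)}$ throughout --- trivially at the new pair and by hypothesis elsewhere. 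The induction hypothesis applied to $\tilde{G}$ then yields $\tilde{G} \simeq_0 \game{0}$, hence $G^{(2)} \simeq_0 \game{0}$.

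The main obstacle is that the mirror response only yields $\simeq_0$ at the played pair, whereas the lemma's hypothesis demands $\simeq_1$. The zero upper set reduction is precisely what repairs this: by discarding everything above $i_0$ and $\sigma(i_0)$, the played pair becomes maximal, so Theorem~\ref{thm:special_substitution} can collapse it into identical components and restore the full hypothesis for the induction. A secondary technicality is confirming that $\tilde{G}$ is strictly smaller than $G$ in the well-founded order so that the induction hypothesis applies, which follows from $\tilde{G}_{\sigma(i_0)}$ being a proper option of $G_{\sigma(i_0)}$ together with the shape $S \setminus Z_{i_0}$ being contained in $S$.
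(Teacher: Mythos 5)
Your overall plan --- a mirror strategy justified by the incomparability of $i_0$ and $\sigma(i_0)$ --- is the same as the paper's, and your derivation of that incomparability from the weakly order-preserving hypothesis is correct (the paper uses this fact without spelling it out). The genuine problem is the well-foundedness of your induction. Because you only arrange $G^{(1)}_{i_0} \simeq_0 G^{(2)}_{\sigma(i_0)}$ rather than equality, you must pass to the auxiliary game $\tilde{G}$, which is \emph{not} a position reachable from $G$: its component at $i_0$ is an option of $G_{\sigma(i_0)}$ and bears no size relation whatsoever to the discarded $G_{i_0}$. Your justification that $\tilde{G}$ precedes $G$ in the well-founded order addresses only the component at $\sigma(i_0)$ and the shrinking shape, and it fails. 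For instance, take $G_{i_0} = \game{1}$ and let $G_{\sigma(i_0)}$ be a game whose unique option $H$ satisfies $\gnum(H)=0$ but has a very deep game tree; then $\gset(G_{i_0}) = \gset(G_{\sigma(i_0)}) = \set{0}$, yet after the opponent plays $\game{1} \to \game{0}$ your $\tilde{G}$ carries \emph{two} copies of $H$ where $G$ carried one copy of $G_{\sigma(i_0)}$ and one game of rank $1$, so the rank of $\tilde{G}$ can strictly exceed that of $G$. As written, the induction does not terminate.

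The gap is repairable, and the paper's proof shows the cleanest repair: invoke Theorem~\ref{thm:general_substitution} once at the outset to replace each component so that $G_i = G_{\sigma(i)}$ holds \emph{exactly} (this is legitimate since $1$-equivalent replacements preserve the Grundy set of the compound). The mirror response is then a literal copy of the opponent's move, the resulting position $G''$ is an honest option of an option of $G$ whose zeroed set $Z_{i_0}$ is $\sigma$-invariant by hypothesis, and the induction runs over the well-founded option relation --- with no need for Lemma~\ref{lem:zero_upper_set} or Theorem~\ref{thm:special_substitution} at all. Alternatively, one could salvage your local-repair version by ranking positions via the multiset of $\sigma$-orbits of component ranks, but that measure is noticeably more delicate (particularly for infinite shapes) and you would still need to supply it explicitly.
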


\begin{proof}
  Let $G \coloneqq S \ojoin_i G_i$.
  In view of Theorem~\ref{thm:general_substitution} we may assume, without loss of generality, that $G_i = G_{\sigma(i)}$ for all $i \in S$.

  Consider a move $G \to G'$ in $i_0$.
  Since the elements $i_0$ and $i_1 \coloneqq \sigma(i_0)$ of $S$ are incomparable, the component $G_{i_1}$ persists to $G'$, i.e.\ $G'_{i_1} = G_{i_1}$.
  Hence, the same move in $i_1$ results in an option $G''$ of $G'$ with $G''_{i_0} = G''_{i_1}$.
  By induction, we have $G'' \simeq_0 \game{0}$ and hence $\Gamma_0(G') \neq 0$.
  The latter is true for all $G'$, and therefore $\Gamma_0(G) = 0$, i.e.\ $G \simeq_0 \game{0}$.
\end{proof}

In the general case, we allow for fixed points of $\sigma$, provided that they form a lower set of the shape.
For the purpose of determining the Grundy number, we can then restrict ourselves to this set.
An example of this is depicted in Figure~\ref{fig:symmetry}.

\begin{figure}[ht]
  \begin{tikzpicture}
    \tikzset{
        component/.style={circle, inner sep=0, minimum size=20pt},
        pics/symmetry 1/.style n args={5}{code={
          \node[component] (L0C) at (   0, -3.0) {\small #1};
          \node[component] (L1L) at (-2  , -1.5) {\small #3};
          \node[component] (L1C) at (   0, -0.5) {\small #2};
          \node[component] (L1R) at (+2  , -1.5) {\small #3};
          \node[component] (L2L) at (-2.0, +1.0) {\small #4};
          \node[component] (L2R) at (+2.0, +1.0) {\small #4};
          \node[component] (L3L) at (-1.25, +3.5) {\small #5};
          \node[component] (L3R) at (+1.25, +3.5) {\small #5};

          \draw[thick] (L0C) -- (L1L)  (L0C) -- (L1C)  (L0C) -- (L1R);
          \draw[thick] (L1C) -- (L2L)  (L1C) -- (L2R);
          \draw[thick] (L2L) -- (L3L);
          \draw[shorten <= 3pt, shorten >= 3pt, thick] (L2R) -- (L3L);
          \draw[thick] (L2R) -- (L3R);
        }},
        pics/symmetry 2/.style n args={2}{code={
          \node[component] (L0C) at (   0, -3.0) {\small #1};
          \node[component] (L1C) at (   0, -0.5) {\small #2};

          \draw[thick] (L0C) -- (L1C);
        }},
    }

    \pic[scale=0.4] (G)   at (-3,0) 
      {symmetry 1={$G_1$}{$G_2$}{$H_1$}{$H_2$}{$H_3$}};

    \node at (0, -.3) {$\simeq_0$};
    \node at (0, -.9) {\small(Theorem~\ref{thm:symmetry})};

    \pic[scale=0.4] (G')  at (+3,0) 
      {symmetry 2={$G_1$}{$G_2$}};
  \end{tikzpicture}
  \caption{Pruning a tulip using its symmetry.}
  \label{fig:symmetry}
\end{figure}

\begin{theorem}\label{thm:symmetry}
  Let $S \ojoin_i G_i$ be a terminating game and let $\sigma \colon S \to S$ be a weakly order-preserving involution.
  If $G_i \simeq_1 G_{\sigma(i)}$ for all $i \in S$ and the fixed points of $\sigma$ form a lower set $S^\sigma \subseteq S$, then $S \ojoin_i G_i \simeq_0 S^\sigma \ojoin_i G_i$.
\end{theorem}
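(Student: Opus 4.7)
The plan is to prove $G \simeq_0 H$, with $G \coloneqq S \ojoin_i G_i$ and $H \coloneqq S^\sigma \ojoin_i G_i$, by describing a pairing strategy that makes the second player win the disjunctive sum $G \dsum H$, and to verify the strategy by induction on $G$. As a preliminary cleanup, Theorem~\ref{thm:general_substitution} lets me replace, on one half of each nontrivial $\sigma$-orbit, the game $G_i$ by a literal copy of $G_{\sigma(i)}$, so that without loss of generality $G_i = G_{\sigma(i)}$ holds as an equality of games for every $i \in S$. The $S^\sigma$-components are unaffected by this step, so $H$ is unchanged.

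The easy case is when the first player moves in either summand at some $i_0 \in S^\sigma$. The second player copies the identical move in the other summand's $i_0$-component. The residual $G$-shape is $S' \coloneqq S \setminus \set{j \given j > i_0}$; since $i_0$ is fixed, the weakly order-preserving axiom applied at $i_0$ already forces $\set{j \given j > i_0}$ to be $\sigma$-invariant, so $\sigma$ restricts to a weakly order-preserving involution of $S'$, and the fixed-point set $S^\sigma \cap S'$ is still a lower set of $S'$. Induction then gives $\simeq_0$ of the two residual games.

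The essential case is a first move in $G$ at some $i_0 \in T \coloneqq S \setminus S^\sigma$, producing $G \to G'$. The second player responds by replaying the very same move at $\sigma(i_0)$, producing $G' \to G''$. Two preliminary observations make this work. First, $i_0$ and $\sigma(i_0)$ are incomparable, because $i_0 < \sigma(i_0)$ would, by weak order-preservation at $i_0$, force $\sigma(\sigma(i_0)) = i_0$ into the $\sigma$-invariant set $\set{k \given i_0 < k \text{ or } \sigma(i_0) < k}$, which is absurd. Second, that same upper set $U$ is $\sigma$-invariant, so $S'' \coloneqq S \setminus U$ is $\sigma$-stable, and the components of $G''$ over $S''$ are either untouched or equal on the orbit $\{i_0, \sigma(i_0)\}$. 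Since $S^\sigma$ is a lower set and $i_0, \sigma(i_0) \in T$, no element of $S^\sigma$ can lie in $U$, so the fixed-point restriction of $G''$ is literally $H$. Induction then yields $G'' \simeq_0 H$, so $G'' \dsum H \simeq_0 \game{0}$, closing the pairing.

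The main obstacle I expect is precisely this last case: the pairing simultaneously requires the mirror move at $\sigma(i_0)$ to exist with equal post-move components on the orbit, the residual shape to be $\sigma$-stable, and the portion of the shape torn off to be disjoint from $S^\sigma$. These three requirements correspond exactly to the three features of the hypotheses, namely $G_i \simeq_1 G_{\sigma(i)}$, weak order-preservation, and the lower-set condition on $S^\sigma$, which is a strong indication that the statement is in its natural form.
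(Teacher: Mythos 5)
Your argument is correct, and the winning strategy it implements --- answer a move at $i_0 \in S \setminus S^\sigma$ by the same move at $\sigma(i_0)$ inside the big summand, and answer a move at a fixed point by the identical move in the other summand --- is exactly the strategy underlying the paper's proof. The packaging is genuinely different, though. The paper does not redo the induction at all: it regards $S \ojoin_i G_i \dsum S^\sigma \ojoin_i G_i$ as a single ordered join in the shape $S + S^\sigma$, equips that shape with the fixed-point-free involution $\tau$ acting by $\sigma$ on $S \setminus S^\sigma$ and transposing the two copies of $S^\sigma$, and then invokes the already-established fixed-point-free case (Lemma~\ref{lem:symmetry}) in one line. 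Your two cases are precisely the two halves of $\tau$, so you have in effect re-proved the needed instance of Lemma~\ref{lem:symmetry} inline, together with the extra bookkeeping for the $H$-summand. What your version buys is that the verifications the paper compresses into ``by construction'' become explicit: the incomparability of $i_0$ and $\sigma(i_0)$, the $\sigma$-invariance of the destroyed upper set, and the observation that the lower-set hypothesis on $S^\sigma$ is exactly what keeps the fixed-point components alive after a mirrored pair of moves. What the paper's version buys is brevity and a clean use of the compositional closure of ordered joins under shapes (in the spirit of Lemma~\ref{lem:composition}) to transport a symmetry statement across a disjunctive sum; if Lemma~\ref{lem:symmetry} is already in hand, it is worth recognizing that your entire essential case is an instance of it.
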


\begin{proof}
  Consider the game $S \ojoin_i G_i \dsum S^\sigma \ojoin_i G_i$ as an ordered join in the shape $S + S^\sigma$.
  Further, let $\tau \colon S + S^\sigma \to S + S^\sigma$ be the map operating by $\sigma$ on $S \setminus S^\sigma$ and transposing the remaining two copies of $S^\sigma$ in $S + S^\sigma$.

  By construction, $\tau$ is a weakly order-preserving involution without fixed points.
  From Lemma~\ref{lem:symmetry}, we conclude $S \ojoin_i G_i \dsum S^\sigma \ojoin_i G_i \simeq_0 \game{0}$ and hence $S \ojoin_i G_i \simeq_0 S^\sigma \ojoin_i G_i$.
\end{proof}

\section{Computational Aspects}\label{sec:computation}

Throughout this article, some basic invariants of games have been essential for the presented theory --- the outcome class, the Grundy number, and the Grundy set.
For an ordered join, Theorem~\ref{thm:general_substitution} shows that these are completely determined by its shape and the Grundy sets of its components.
In this section we explore computational aspects of this relationship.

\medbreak

The Grundy set of short games, as well as the Grundy number and outcome class, can be evaluated na\"{i}vely.
This can be used to compute the Grundy set of an ordered join, even if only its shape $S$ and the Grundy sets of its components are given: one replaces the components by arbitrary games $G_i$ with the respective Grundy sets.
Herein, a natural choice is one resulting in a minimal number of subgames of $S \ojoin_i G_i$.
This is achieved, if each component itself has a minimal number of subgames, i.e.\ $G_i$ is the game with options $\game\lambda$, one for each $\lambda \in \gset(G_i)$.

This method of replacing components can also be useful in the general setting.
This approach has the potential to drastically reduce the number of subgames one has to consider in the na\"{i}ve algorithm, thereby offsetting the cost of determining suitable replacements.
Furthermore, the computations for each component are independent of one another.
One can thus use any available specialized algorithm for specific components and perform all these computations in parallel.

For disjunctive and ordinal sums, the basic invariants considered above can be computed with simple and efficient methods using the formulas discussed in Section~\ref{sec:preliminaries}.
In view of this, the above method for ordered joins seems unsatisfying at first.
However, even the simple case of computing the outcome class of $S \ojoin_i \game{1}$ is known to be a \PSPACE-complete problem \cite{G2013}, and thus the question at hand is a \PSPACE-hard problem.
Consequently, one should expect only minor improvements in this general setting.

One such improvement is obtained by leveraging the compositional structure of ordered joins with the help of Lemma~\ref{lem:composition}.
In order to formalize this, we need further notions and results from the theory of posets regarding the modular composition.

\begin{definition}
  A modular composition $S = R \mcomp_i S_i$ is called \emph{proper}, if each $S_i$ is nonempty and at least one $S_i$ satisfies $1 < \card{S_i} < \card{S}$.

  A poset $S$ is called \emph{decomposable}, if it can be represented by a proper modular composition.
  Otherwise it is called \emph{indecomposable}.
\end{definition}

\begin{theorem}[cf.\ {\cite[Satz 1]{G1967}}]\label{thm:poset_decomposition}
  Let $S$ be a finite decomposable poset.
  Then exactly one of the following statements is true.
  \begin{enumerate}
    \item\label{thm:poset_decomposition:c1} $S = R \mcomp_i S_i$ by a proper composition with $R$ indecomposable.
    \item\label{thm:poset_decomposition:c2} $S = R \mcomp_i S_i$ by a proper composition with $R$ an antichain.
    \item\label{thm:poset_decomposition:c3} $S = R \mcomp_i S_i$ by a proper composition with $R$ a chain.
  \end{enumerate}
\end{theorem}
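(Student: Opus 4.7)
The plan is to develop the modular decomposition theory for finite posets, following the strategy originally due to Gallai. First, I would introduce \emph{modules}: a nonempty subset $M \subseteq S$ is a module if for every $s \in S \setminus M$ the comparability status of $s$ with elements of $M$ is uniform --- $s$ is either less than every $m \in M$, greater than every $m \in M$, or incomparable with every $m \in M$. By Definition~\ref{def:modular_composition}, the proper modular compositions $S = R \mcomp_i S_i$ correspond bijectively to partitions of $S$ into modules, with the $S_i$ being the parts and $R$ the quotient order on the indices.

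The main technical tool is the overlap lemma: if $M_1, M_2 \subseteq S$ are modules with $M_1 \cap M_2 \neq \emptyset$, then $M_1 \cap M_2$, $M_1 \cup M_2$, $M_1 \setminus M_2$, and $M_2 \setminus M_1$ are again modules. This is a routine case analysis on the three possible relations between elements of $S \setminus (M_1 \cup M_2)$ and the constituent modules.

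Let $M_1,\dots,M_k$ be the inclusion-maximal proper modules of $S$. I would split into two regimes. In the \emph{disjoint regime}, the $M_i$ are pairwise disjoint; since singletons are modules, the $M_i$ partition $S$, the induced quotient $R$ on the indices is indecomposable --- any proper module of $R$ would lift to a module of $S$ strictly containing some $M_i$ --- and we are in case~(\ref{thm:poset_decomposition:c1}). In the \emph{overlapping regime}, where some $M_i,M_j$ overlap, the overlap lemma combined with maximality forces $M_i \cup M_j = S$; iterating, one shows that the union of an overlapping pack of maximal modules is always $S$, and extracting a finer partition into modules one obtains a quotient $R$ that is forced to be either an antichain, giving case~(\ref{thm:poset_decomposition:c2}), or a chain, giving case~(\ref{thm:poset_decomposition:c3}), according to whether the comparability graph of the resulting parts is disconnected or its complement is. Mutual exclusivity then follows because for $|R| \geq 2$ the three types of quotients --- indecomposable, antichain, chain --- are pairwise distinct, and no decomposable $S$ admits a trivial quotient.

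The main obstacle will be the overlapping regime: showing that the only ways maximal modules can overlap are the ``parallel'' and ``series'' patterns, with no exotic third possibility. This rests on the dichotomy that for a poset the relation between two disjoint modules is always uniform --- $M_i < M_j$, $M_j < M_i$, or $M_i$ incomparable with $M_j$ --- and a propagation argument using the overlap lemma shows that once overlap occurs, this uniformity spreads globally into exactly one of the antichain or chain structures.
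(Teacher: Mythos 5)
The paper itself offers no proof of this theorem: it is quoted, with a ``cf.'', from Gallai's \emph{Satz 1}, so there is nothing in-paper to compare against. Your strategy --- modules, the overlap lemma, and a case analysis on the inclusion-maximal proper modules --- is exactly the standard route to Gallai's theorem, so the approach is right. But as written the sketch has genuine gaps, concentrated where the theorem is actually hard.

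First, your mutual-exclusivity argument fails. Under the paper's definitions a two-element poset admits no proper composition at all and is therefore \emph{indecomposable}, while simultaneously being a chain or an antichain; so the claim that for $\card{R} \geq 2$ the three quotient types are pairwise distinct is false. Concretely, every proper composition of $A_3$ has quotient $A_2$, which witnesses cases (\ref{thm:poset_decomposition:c1}) and (\ref{thm:poset_decomposition:c2}) simultaneously under a literal reading. The intended (Gallai) convention is that the indecomposable case produces a quotient with at least four elements, and exclusivity then rests on two facts your sketch omits: a comparability graph and its complement cannot both be disconnected (separating (\ref{thm:poset_decomposition:c2}) from (\ref{thm:poset_decomposition:c3})), and an indecomposable poset on at least three elements has connected and co-connected comparability graph, so composing over it yields a connected and co-connected $S$ (separating (\ref{thm:poset_decomposition:c1}) from the other two). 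Moreover, exclusivity is a statement about \emph{all} proper compositions of $S$, not about comparing three candidate quotients, so ``the types are pairwise distinct'' would not suffice even where it is true. Second, your disjoint regime does not automatically land in case (\ref{thm:poset_decomposition:c1}): for the poset $\set{a < b,\ a < c,\ b \parallel c}$ the maximal proper modules are $\set{a}$ and $\set{b,c}$, pairwise disjoint, yet the quotient is the two-element chain and the correct classification is case (\ref{thm:poset_decomposition:c3}). Finally, the overlapping regime --- showing that overlapping maximal modules force the quotient to be exactly an antichain or exactly a chain, with no third pattern --- is the technical heart of the theorem, and your sketch asserts rather than proves it. (A smaller point: the overlap lemma needs both differences $M_1 \setminus M_2$ and $M_2 \setminus M_1$ nonempty, not merely $M_1 \cap M_2 \neq \emptyset$; in the chain $1<2<3$ with $M_1 = S$ and $M_2 = \set{2}$ the difference $\set{1,3}$ is not a module. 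This is harmless for incomparable maximal modules but should be stated correctly.)
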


In case (\ref{thm:poset_decomposition:c1}) the decomposition asserted by the above theorem is unique.
For the two other cases there exist unique maximal decompositions.
Recursively decomposing the posets $S_i$, one obtains a hierarchical representation of $S$.
It is unique, provided that maximal quotients are chosen along the way, and called the \emph{modular decomposition} of $S$.

For a short ordered join $S \ojoin_i G_i$, the modular decomposition of $S$ allows one to apply above replacement method in a recursive fashion.
Its potential improvements can thus be leveraged in multiple layers of the computation.
This approach is feasible, since the modular decomposition can be computed efficiently.\footnote{The modular decomposition of a poset agrees with the modular decomposition of its comparability graph (cf.\ \cite{G1967}).  For undirected graphs, various linear time algorithms are known (see e.g.\ \cite{TCHP2008} and references therein).}
The cases (\ref{thm:poset_decomposition:c2}) and (\ref{thm:poset_decomposition:c3}) of Theorem~\ref{thm:poset_decomposition} correspond to disjunctive and ordinal sums of games respectively.
As such, these cases allow for efficient computation using the appropriate formulas from Section~\ref{sec:preliminaries}.
In particular, if only those cases appear in the modular decomposition of $S$, i.e.\ if $S$ is a series-parallel poset, then $\gset(S \ojoin_i G_i)$ can be computed in polynomial time from the Grundy sets $\gset(G_i)$.
If $G_i = \game{1}$ for all components, this result is due to Deuber and Thomass\'{e} \cite{DT1996}.

The latter simplifications also motivate the following general question.

\begin{question}
  Given the Grundy sets of its components, for which indecomposable shapes $S$ can $\gset(S \ojoin_i G_i)$ be computed efficiently?
\end{question}

\section{Concluding Remarks}\label{sec:conclusion}

We have introduced the ordered join and given a complete analysis of it for impartial combinatorial games.
However, our definition of the ordered join applies equally well for partizan games.
In this case, Theorem~\ref{thm:outcomes} provides a possible definition of $1$-equivalence.
One can then derive a partizan variant of Theorem~\ref{thm:general_substitution} allowing for finitely many substitutions.
This is done in the same way as in the impartial case, which was discussed after Theorem~\ref{thm:outcomes}.

\begin{question}
  Which presented results generalize to partizan games?
\end{question}

In the theory of impartial games, the Sprague-Grundy Theorem shows the importance of $0$-equivalence for disjunctive sums --- studying the latter necessitates studying the former.
With Theorem~\ref{thm:outcomes}, we have shown that $1$-equivalence and ordered joins are related in the same way.

The notions of $0$- and $1$-equivalence can be seen as part of a hierarchy of equivalence relations:
$G$ and $H$ are $k$-equivalent, if $\gtype[k](G) = \gtype[k](H)$, where $\gtype[k](G) \coloneqq \set{\gtype[k-1](G') \given G \to G'}$.
Through induction, Theorem~\ref{thm:general_substitution} extends to higher orders of this hierarchy, i.e.\ $\gtype[k](S \ojoin_i G_i)$ is determined by $S$ and the $\gtype[k](G_i)$ whenever $k \geq 1$.
As such, this extension of Theorem~\ref{thm:general_substitution} is a compatibility condition between the hierarchy and ordered joins, and hence also disjunctive and ordinal sums.
However, these compounds do not necessitate studying the higher equivalences.

\begin{question}
  Is there an appropriate compound for $k$-equivalence?
\end{question}

\section*{Acknowledgements}
The authors would like to thank Prof.\ Dr.\ Michael Eisermann and Dr.\ Friederike Stoll for their helpful comments and suggestions.

\bigbreak

\end{document}